\renewcommand{\geq}{\geqslant}
\renewcommand{\leq}{\leqslant}
\newtheorem{theorem}{Theorem}
\newtheorem{lemma}[theorem]{Lemma}
\newtheorem{cor}[theorem]{Corollary}
\newtheorem*{thm}{Theorem}
\newtheorem*{cor*}{Corollary}
\begin{document}

\title{$PD_3$-groups and HNN extensions}

\author{Jonathan A. Hillman }
\address{School of Mathematics and Statistics\\
     University of Sydney, NSW 2006\\
      Australia }

\email{jonathan.hillman@sydney.edu.au}

\begin{abstract}
We show that if a $PD_3$-group $G$ splits as an HNN extension $A*_C\varphi$
where $C$ is a $PD_2$-group then the Poincar\'e dual in 
$H^1(G;\mathbb{Z})= Hom(G,\mathbb{Z})$ of the homology class $[C]$
is the epimorphism $f:G\to\mathbb{Z}$ with kernel the normal closure of $A$.
We also make several other observations about $PD_3$-groups 
which split over $PD_2$-groups.
\end{abstract}

\keywords{fundamental class, HNN extension, $PD_3$-group, surface group}

\subjclass{57N13}

\maketitle

In this note we shall give algebraic analogues of some 
properties of Haken 3-manifolds.
We are interested in the question ``when does a $PD_3$-group
split over a $PD_2$-group?".
In \S2 we show that such splittings are minimal in a natural partial order 
on splittings over more general subgroups.
In the next two sections we consider $PD_3$-groups $G$ which
split as an HNN extension
$A*_C\varphi$ with $A$ and $C$ finitely generated.
In \S3 we show that $A$ and $C$ have the same number of indecomposable factors.
Our main result is in \S4, 
where we show that if $C$ is a $PD_2$-group then the Poincar\'e dual in 
$H^1(G;\mathbb{Z})= Hom(G,\mathbb{Z})$ of the homology class $[C]$
is the epimorphism $f:G\to\mathbb{Z}$ with kernel the normal closure of $A$.
In \S5 we extend an argument from \cite{Hi03} to show that no $FP_2$ subgroup 
of a $PD_3$-group is a properly ascending HNN extension,
and in \S6 we show that if $G$ is residually finite and splits over a $PD_2$-group then $G$ has a subgroup of finite index with infinite abelianization.
Our arguments extend readily to $PD_n$-groups with $PD_{n-1}$-subgroups, 
but as our primary interest is in the case $n=3$,
we shall formulate our results in such terms.

\section{terminology}

We mention here three properties of 3-manifold groups that are not 
yet known for all $PD_3$-groups: coherence, residual finiteness and having subgroups of finite index with infinite abelianization.
Coherence may often be sidestepped by requiring 
the subgroups in play to be $FP_2$ rather than finitely generated. 
If every finitely generated subgroup of a group $G$ is $FP_2$ 
we say that $G$  is {\it almost coherent}.

We shall say that a group $G$ is {\it split\/} over a subgroup $C$ 
if it is either a generalized free product with amalgamation (GFPA) 
$G=A*_CB$, where $C<A$ and $C<B$,
or an HNN extension $G=HNN(A;\alpha,\gamma:C\to{A})$,
where $\alpha$ and $\gamma$ are monomorphisms.
(We may also write $G=A*_C\varphi$, where $\varphi=\gamma\circ\alpha^{-1}$.)
An HNN extension is {\it ascending\/} if one of the associated subgroups
is the base. 
In that case we may assume that $\alpha=id_A$, and $\varphi=\gamma$
is an injective endomorphism of $A$.

The {\it virtual first Betti number} $v\beta(G)$ of a finitely generated group 
is the least upper bound of the first Betti numbers $\beta_1(N)$ of normal subgroups $N$ of finite index in $G$.
Thus $v\beta(G)>0$ if some subgroup of finite index maps onto $\mathbb{Z}$.

A group $G$ is {\it large\/} if it has a subgroup of finite index 
which maps onto a non-abelian free group.
It is clear that if $G$ is large then $v\beta(G)=\infty$.

\section{comparison of splittings}

Let $G$ be a group which is a GFPA $A*_CB$ or an HNN extension $A*_C\varphi$.
If we identify the groups $A, B$ and $C$ with subgroups of $G$ then 
inclusion defines a partial order on such splittings:
$A*_CB\leq{A'*_{C'}B'}$ if $A\leq{A'},B\leq{B'}$ and $C\leq{C'}$,
and $A*_C\varphi\leq{A'*_{C'}\varphi'}$ if $A\leq{A'},C\leq{C'}$
and $\varphi'|_C=\varphi$, and the stable letters coincide.
(In the HNN case we are really comparing splittings compatible 
with a given epimorphism
$G\to\mathbb{Z}\cong{G/\langle\langle{A}\rangle\rangle}$.)

\begin{lemma}
\label{brit}
Let $G=A'*_C\varphi$ be an HNN extension,
with stable letter $t$,
and let $A\leq{A'}$ be a subgroup such that $C\cup\varphi(C)\leq{A}$.
If $G=\langle{A,t}\rangle$ then $A=A'$.
\end{lemma}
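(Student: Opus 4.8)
The plan is to run the Britton normal form argument for the HNN extension $G=A'*_C\varphi$ (the label already hints at this), exploiting the hypothesis $C\cup\varphi(C)\leq A$ at exactly one point: it guarantees that every pinch-reduction returns a letter lying in $A$ rather than merely in $A'$. Since $A\leq A'$ is given, it suffices to prove the reverse inclusion $A'\leq A$, that is, that every element of the base already lies in $A$.

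So fix $a\in A'$. As $G=\langle A,t\rangle$, I would write $a$ as a word in the generators, which after inserting trivial $A$-syllables where necessary takes the form $w=a_0t^{\epsilon_1}a_1\cdots t^{\epsilon_m}a_m$ with each $a_i\in A$ and each $\epsilon_i=\pm1$; call $m$ its $t$-length. The argument is then an induction on $m$. If $m=0$ then $a=w\in A$ and we are finished. If $m\geq1$, then since $w$ represents the element $a$ of the base $A'$, Britton's lemma forbids $w$ from being reduced: it must contain a pinch, namely a subword $ta_it^{-1}$ with $a_i\in C$, or a subword $t^{-1}a_it$ with $a_i\in\varphi(C)$. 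In the first case $ta_it^{-1}=\varphi(a_i)\in\varphi(C)$, and in the second $t^{-1}a_it=\varphi^{-1}(a_i)\in C$. This is the step where the hypothesis does its work: because $C\cup\varphi(C)\leq A$, the resulting letter lies in $A$, so substituting it and merging the adjacent $A$-syllables produces a word $w'$ in $A$ and $t$ of $t$-length $m-2$ that still represents $a$. By the inductive hypothesis $a\in A$, which completes the induction and hence the proof that $A'\leq A$.

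The content of the proof is thus concentrated entirely in the reduction step, and the only genuine obstacle is the bookkeeping that each pinch-reduction preserves the property of being a word all of whose $A$-syllables lie in $A$; this is precisely what $C\cup\varphi(C)\leq A$ secures, and without it the reduced letter could escape $A$ into $A'$. The two supporting facts I would want cleanly in hand are the standard consequence of Britton's lemma that a reduced word of positive $t$-length cannot represent an element of the base (so a pinch is forced at every stage with $m\geq1$), and the fact that $m$ is even, which follows by applying the retraction $t\mapsto1$, $A'\mapsto0$ to $G\to\mathbb{Z}$ and noting $\sum\epsilon_i=0$; together these ensure the $t$-length descends through even values to $0$. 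Combining $A'\leq A$ with the given $A\leq A'$ yields $A=A'$.
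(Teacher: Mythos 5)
Your proof is correct and is essentially the paper's own argument: both write $a\in A'$ as a word in $A$ and $t$, use the hypothesis $C\cup\varphi(C)\leq A$ precisely to keep each pinch-reduction inside $A$, and invoke Britton's Lemma to force the $t$-length down to zero. The only difference is cosmetic: you induct on the $t$-length, while the paper takes a word of minimal $t$-length, observes it can contain no pinch, and concludes directly that the length is zero.
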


\begin{proof}
Let $\alpha\in{A'}$. 
Then we may write $\alpha=a_0t^{\varepsilon_1}a_1\dots{t^{\varepsilon_n}a_n}$
where $a_i\in{A}$ and $\varepsilon_i=\pm1$, for all $i$,
since $G=\langle{A,t}\rangle $.
We may clearly assume that $n$ is minimal.
Hence there are no substrings of the form $tct^{-1}$
or $t^{-1}\varphi(c)t$, with $c\in{C}$, in this expression for $\alpha$
(since any such may be replaced by $\varphi(c)$ or $c$, respectively).
But it then follows from Britton's Lemma for the HNN extension $A'*_C\varphi$
that $n=0$, and so $\alpha=a_0$ is in $A$.
\end{proof}

If $G$ is a $PD_3$-group then we would like to know when 
$C$ can be chosen to be a $PD_2$-group. 

\begin{lemma}
\label{pd2 minimal}
Let $G$ be a $PD_3$-group which is a generalized free product 
with amalgamation $A*_CB$ or an HNN extension $A*_C\varphi$, 
with $C$ a $PD_2$-group. 
Then the splitting is minimal in the partial order determined by inclusions.
\end{lemma}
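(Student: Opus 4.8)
The plan is to suppose the given splitting is not minimal and to derive a contradiction, treating the HNN case $G=A*_C\varphi$ in detail and indicating the parallel GFPA argument at the end. So suppose there is a strictly smaller compatible splitting $A'*_{C'}\varphi'\leq A*_C\varphi$, with $A'\leq A$, $C'\leq C$, $\varphi'=\varphi|_{C'}$, the same stable letter $t$, and $(A',C')\neq(A,C)$. Since $A'*_{C'}\varphi'$ is a splitting of $G$ with stable letter $t$ we have $G=\langle A',t\rangle$. My first reduction is the observation that Lemma~\ref{brit} makes the vertex group rigid once the edge group is known to be full: if I can prove $C'=C$, then $C\cup\varphi(C)=C'\cup\varphi'(C')\leq A'$ while $G=\langle A',t\rangle$, so Lemma~\ref{brit} (taking its base $A'$ to be our $A$ and its subgroup $A$ to be our $A'$) yields $A'=A$. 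Hence everything reduces to proving $C'=C$, and this is precisely where the hypothesis that $C$ is a $PD_2$-group must be spent.

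To control the edge group I would first bound its cohomological dimension from below. From the sub-splitting and the standard inequality $cd(G)\leq\max\{cd(A'),\,cd(C')+1\}$ together with $cd(G)=3$, and the fact that the vertex group $A'$ of an essential HNN decomposition has infinite index in $G$ (whence $cd(A')\leq 2$ by Strebel's theorem, since a subgroup of infinite index in a $PD_n$-group has cohomological dimension $<n$), I obtain $cd(C')\geq 2$. On the other hand $C'\leq C$ with $C$ a $PD_2$-group, so $cd(C)=2$; if $C'$ had infinite index in $C$ then Strebel's theorem applied inside $C$ would force $cd(C')\leq 1$. Hence $C'$ has finite index in $C$, say $[C:C']=d<\infty$, and, being a finite-index subgroup of a $PD_2$-group, it is itself a $PD_2$-group.

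The remaining point, upgrading ``finite index'' to ``equal'', i.e.\ showing $d=1$, is where Poincar\'e duality enters, and it is the step I expect to be the main obstacle. The idea is that $C'\hookrightarrow C$ is a degree-$d$ inclusion of $PD_2$-groups, so (with appropriately oriented, or $w$-twisted, coefficients) the fundamental class of $C'$ pushes forward to $d$ times that of $C$, giving $[C']=d\,[C]$ in $H_2(G;\mathbb{Z})$. Both $[C]$ and $[C']$ are the classes carried by the edge group of a splitting compatible with the single epimorphism $f\colon G\to\mathbb{Z}$ determined by the common stable letter $t$, so their Poincar\'e duals in $H^1(G;\mathbb{Z})=\mathrm{Hom}(G,\mathbb{Z})$ coincide with the primitive class $f$ (this is the computation carried out in \S4, but here I only need that the distinguished class is primitive). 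Dualising $[C']=d\,[C]$ then gives $f=d\,f$ in the torsion-free group $\mathrm{Hom}(G,\mathbb{Z})$, and since $f\neq 0$ I conclude $d=1$, that is $C'=C$; Lemma~\ref{brit} then gives $A'=A$, as above.

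For the GFPA case $G=A*_CB$ the same scheme applies. The cohomological-dimension estimate together with Strebel's theorem again forces the edge group $C'\leq C$ to have finite index, the duality argument forces $d=1$ and hence $C'=C$, and in place of Lemma~\ref{brit} I would run the evident normal-form argument: given $a\in A$, write it as a reduced alternating word in $A'$ and $B'$ (possible since $G=\langle A',B'\rangle$ and $C=C'\leq A'\cap B'$); by the normal form for $A*_CB$ this word has a single syllable, lying either in $A'$ or in $A\cap B'\subseteq A\cap B=C\leq A'$, so $a\in A'$. Thus $A\leq A'$, whence $A=A'$, and symmetrically $B=B'$, completing the proof.
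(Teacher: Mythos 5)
Your HNN case is essentially correct, but it takes a genuinely different (and heavier) route than the paper. Where you invoke Theorem \ref{dual class} twice — once for each splitting, noting that they share the stable letter and hence the same epimorphism $f$ — to get $[C']=\pm[C]$ and then $d=1$ from $[C']=d[C]$, the paper instead compares the two Mayer--Vietoris sequences directly: the connecting maps $\delta\colon H_3(G;\mathbb{Z})\to H_2(C;\mathbb{Z})$ and $\delta'\colon H_3(G;\mathbb{Z})\to H_2(C';\mathbb{Z})$ are both isomorphisms by \cite{BE78}, and commutativity ($\delta=\mathrm{incl}_*\circ\delta'$) forces the inclusion $C'\to C$ to have degree $1$. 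Your route creates no circularity (Theorem \ref{dual class} does not use this lemma), but it would require reordering the paper and it inherits the orientability hypothesis under which Theorem \ref{dual class} is stated; the paper's argument is self-contained in \S2. Your Strebel/cohomological-dimension reduction to $[C:C']<\infty$, and your normal-form endgames (Lemma \ref{brit} in the HNN case, the alternating-word argument in the GFPA case), are fine and match the paper's.

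The genuine gap is the GFPA case, where you assert that ``the duality argument forces $d=1$.'' It does not. An amalgamated splitting $G=A*_CB$ has no stable letter and no associated epimorphism $f\colon G\to\mathbb{Z}$ (indeed $G$ may have $\beta_1(G)=0$), so there is no primitive class in $\mathrm{Hom}(G,\mathbb{Z})$ to dualize against. Worse, for a GFPA over a $PD_2$-group the image of $[C]$ in $H_2(G;\mathbb{Z})$ is \emph{zero}, because $C$ bounds each of the $PD_3$-pairs $(A,C)$ and $(B,C)$ — this is exactly the observation made in the paper's proof of the Kropholler--Roller theorem in \S4, and it is why case (1) of that theorem is characterized by $[S]=0$. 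Consequently the relation $[C']=d[C]$ degenerates to $0=0$ and carries no information about $d$. To close the gap one must work relative to the vertex groups rather than in $H_2(G;\mathbb{Z})$: for instance, the paper's comparison of connecting homomorphisms, where $\delta$ and $\delta'$ are isomorphisms onto $H_2(C;\mathbb{Z})$ and $H_2(C';\mathbb{Z})$ respectively, so that $\mathrm{incl}_*\colon H_2(C';\mathbb{Z})\to H_2(C;\mathbb{Z})$ is an isomorphism and hence $d=1$. So your proof stands for HNN splittings but is incomplete precisely for amalgamated ones.
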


\begin{proof}
Suppose that $A'*_{C'}B'\leq{A*_CB}$ or $A'*_{C'}\varphi'\leq{A*_C\varphi}$
(respectively), is another splitting for $G$.
Then $C'$ is either a free group or has finite index in $C$.
The inclusions induce a commuting diagram relating the Mayer-Vietoris sequences associated to the splittings.
In each case, the left hand end of the diagram is
\begin{equation*}
\begin{CD}
0\to{H_3(G;\mathbb{Z})}@>\delta'>>{H_2(C';\mathbb{Z})}\\
@V=VV @VVV\\
0\to{H_3(G;\mathbb{Z})}@>\delta>>{H_2(C;\mathbb{Z})}.
\end{CD}
\end{equation*}
Since the connecting homomorphisms $\delta'$ is injective, 
$H_2(C';\mathbb{Z})\not=0$, and so $C'$ cannot be a free group.
Hence it is a $PD_2$-group, 
and so $\delta$ and $\delta'$ are isomorphisms \cite{BE78}.
Since the inclusion of $C'$ into $C$ has degree 1, we see that $C'=C$.
If $G=A'*_C\varphi$ it then follows from Lemma \ref{brit} that $A'=A$.
If $G=A*_CB$ and $G=A'*_CB'$ then a similar argument based on 
normal forms shows that $A'=A$ and $B'=B$. 
\end{proof}

If $f:G\to\mathbb{Z}$ is an epimorphism then $G\cong{A*_C\varphi}$ 
with $\mathrm{Ker}(f)=\langle\langle{A}\rangle\rangle$ and 
stable letter represented by $t\in{G}$ with $f(t)=1$.
For instance, we may take $C=A=\mathrm{Ker}(f)$ and $\varphi$ to be
conjugation by $t$.
If $\mathrm{Ker}(f)$ is finitely generated, 
this is the only possibility 
(up to the choice of $t$ with $f(t)=\pm1$), 
but in general there are other ways to do this.
If $G$ is $FP_2$ then we may choose $A$ and $C$ finitely generated \cite{BS78}, 
and if $G$ is almost coherent then $A$ and $C$ are also $FP_2$.
The construction of \cite{BS78} gives a pair $(A,C)$ with $A$ 
generated by $C\cup\varphi(C)$, 
which is usually far from minimal in this partial order.
(See below for an example.)
If $G$ is $FP$ then $A$ is $FP_k$ if and only if $C$ is $FP_k$,
for any $k\geq1$ \cite[Proposition 2.13]{Bi}.

If $G$ is $FP_2$ and $\mathrm{Ker}(f)$ is not finitely generated 
then any HNN structure for $G$ with finitely generated base and 
associated subgroups is the initial term of an infinite increasing chain 
of such structures, obtained by applying the construction of \cite{BS78}.
If $G=A*_A\varphi$ is a properly ascending HNN extension,
so that $\varphi(A)<A$, 
then $G$ has a doubly infinite chain of HNN structures,
with bases the subgroups $t^nAt^{-n}$, for $n\in\mathbb{Z}$.
However $PD_n$-groups are never properly ascending HNN extensions.
(See Theorem \ref{no proper ascension} below.)
Does every descending chain of HNN structures 
for a $PD_3$-group terminate?
Do any $PD_3$-groups which are HNN extensions have minimal splittings over
$FP_2$-groups which are not $PD_2$-groups?

Let $T_2$ be the orientable surface of genus 2.
The $PD_2$-group $H=\pi_1(T_2)$ has a standard presentation
\[
\langle{a,b,c,d}\mid {[a,b][c,d]=1}\rangle.
\]
We may rewrite this presentation as
\[
\langle{a,b,c,t}\mid {tct^{-1}=aba^{-1}b^{-1}c}\rangle,
\]
which displays $H$ as an HNN extension 
$F(a,b,c)*_{\langle{c}\rangle}\varphi$, 
split over the $PD_1$-group $\langle{c}\rangle\cong\mathbb{Z}$.
The associated epimorphism $f:H\to\mathbb{Z}$ is determined by
$f(a)=f(b)=f(c)=0$ and $f(d)=1$.
In this case the algorithm from \cite{BS78} would suggest taking
$C=\langle{a,b,c}\rangle$ and
$A=\langle{a,b,c,tat^{-1},tbt^{-1}}\rangle$,
giving an HNN extension with base $A\cong{F(5)}$ and split over $C\cong{F(3)}$.
Taking products, we see then that the $PD_3$-group 
$G=\pi_1(T_2\times{S^1})=H\times\mathbb{Z}$ splits over the $PD_2$-group $\mathbb{Z}^2$,
and is also an HNN extension with
base $F(5)\times\mathbb{Z}$ and associated subgroups $F(3)\times\mathbb{Z}$.
The latter groups have one end, but are not $PD_2$-groups.

\section{indecomposable factors}

If $G$ is a $PD_3$-group then $c.d.A=c.d.C=2$, 
since these subgroups have infinite index in $G$, 
and $H_2(C;\mathbb{Z})\not=0$,
as observed in Lemma \ref{pd2 minimal}.
A simple Mayer-Vietoris argument shows that $H^1(A;\mathbb{Z}[G])\cong{H^1(C;\mathbb{Z}[G])}$ as right $\mathbb{Z}[G]$-modules, since $H^i(G;\mathbb{Z}[G])=0$ for $i\leq2$.
(Note that the latter condition fails for $PD_2$-groups.)
The isomorphism is given by the difference $\alpha_*-\gamma_*$
of the homomorphisms induced by $\alpha$ and $\gamma$.

We shall assume henceforth that $A$ and $C$ are finitely generated.
Then these modules may be obtained by extension of coefficients 
from the ``end modules" $H^1(A;\mathbb{Z}[A])$ and $H^1(C;\mathbb{Z}[C])$. 
If one is 0 so is the other,
and so $A$ has one end if and only if $C$ has one end.
If $A$ and $C$ are $FP_2$ and have one end then they are 2-dimensional duality groups, and we may hope to apply the ideas of \cite{KK05}.

Can $G$ have splittings with base and associated subgroups 
having more than one end?
The next lemma implies that the subgroups $A$ and $C$ must have 
the same numbers of indecomposable factors.
(The analogous statement for $PD_2$-groups is false,
as may be seen from the example in \S2 above!)

\begin{lemma}
\label{freeE^1}
Let $K=(*_{i=1}^mK_i)*F(n)$ be the free product of $m\geq1$ 
finitely generated groups $K_i$ with one end and 
$n\geq0$ copies of $\mathbb{Z}$.
Then $H^1(K;\mathbb{Z}[K])\cong\mathbb{Z}[K]^{r-1}$,
where $r=m+n$ is the number of indecomposable factors of $K$.
\end{lemma}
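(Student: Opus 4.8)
The plan is to compute $H^1(K;\mathbb{Z}[K])$ directly from its description as derivations modulo principal derivations, exploiting the fact that derivations of a free product decouple across the factors. Write $M=\mathbb{Z}[K]$ and recall that $H^1(K;M)=\mathrm{Der}(K,M)/\mathrm{PDer}(K,M)$. A crossed homomorphism $K\to M$ is the same as a section of the projection $M\rtimes K\to K$, and by the universal property of free products such a section is determined freely and independently by its restrictions to the factors. Hence restriction gives an isomorphism
\[
\mathrm{Der}(K,M)\cong\Big(\prod_{i=1}^m\mathrm{Der}(K_i,M)\Big)\times M^n,
\]
where the factor $M^n$ records the unconstrained values $d(x_1),\dots,d(x_n)$ on a basis $x_1,\dots,x_n$ of $F(n)$ (and $\mathrm{Der}(F(n),M)\cong M^n$ since $F(n)$ is free).

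Next I would pin down each $\mathrm{Der}(K_i,M)$. Since $K_i$ is finitely generated, hence $FP_1$, and $\mathbb{Z}[K]$ is free as a left $\mathbb{Z}[K_i]$-module, extension of coefficients gives $H^1(K_i;M)\cong H^1(K_i;\mathbb{Z}[K_i])\otimes_{\mathbb{Z}[K_i]}M$, which vanishes because $K_i$ has one end; likewise $M^{K_i}=H^0(K_i;M)=0$ because $K_i$ is infinite. The first vanishing says every derivation $K_i\to M$ is principal, and the second says the element inducing it is unique, so $m\mapsto d_m^{K_i}$ is an isomorphism $M\xrightarrow{\sim}\mathrm{Der}(K_i,M)$. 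Substituting, $\mathrm{Der}(K,M)\cong M^m\times M^n=M^r$, with coordinates $(m_1,\dots,m_m,v_1,\dots,v_n)$ recording $d|_{K_i}=d_{m_i}^{K_i}$ and $d(x_j)=v_j$.

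It then remains to locate the principal derivations and form the quotient. A principal derivation $d_m$ restricts on each $K_i$ to $d_m^{K_i}$ and sends $x_j\mapsto(x_j-1)m$, so under the identification above $\mathrm{PDer}(K,M)$ is exactly the image $W$ of
\[
\Delta\colon M\to M^r,\qquad \Delta(m)=\big(m,\dots,m,\,(x_1-1)m,\dots,(x_n-1)m\big).
\]
Thus $H^1(K;M)=\mathrm{coker}(\Delta)$. Here the hypothesis $m\geq1$ does the essential work: the first coordinate of $\Delta$ is the identity, so the first projection $\pi_1\colon M^r\to M$ satisfies $\pi_1\circ\Delta=\mathrm{id}_M$, whence $\Delta$ is a split monomorphism and $M^r=W\oplus\ker\pi_1$ with $\ker\pi_1\cong M^{r-1}$. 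Therefore $H^1(K;M)\cong M^{r-1}=\mathbb{Z}[K]^{r-1}$, as claimed.

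The main obstacle is precisely this last step — exhibiting the cokernel as \emph{free} — together with the bookkeeping that identifies $\mathrm{PDer}(K,M)$ as the graph of $\Delta$. The content is that one one-ended factor supplies an identity coordinate: when $m=0$ (a purely free group) every coordinate of $\Delta$ has the form $m\mapsto(x_j-1)m$, the map is no longer split, and the cokernel need not be free, so the hypothesis $m\geq1$ is indispensable. A cleaner-looking but ultimately messier alternative would be the Mayer--Vietoris sequence of the free-product graph of groups, which yields
\[
0\to M^{r-1}\to H^1(K;M)\to\bigoplus_{j=1}^n M/(x_j-1)M\to0;
\]
proving that this extension reassembles a free module forces one to compute the connecting map explicitly, which is exactly what the derivation calculation circumvents.
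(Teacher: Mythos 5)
Your proof is correct, and it takes a genuinely different route from the paper's. You compute $H^1(K;\mathbb{Z}[K])$ directly as derivations modulo principal derivations: the free-product universal property splits $\mathrm{Der}(K,M)$ as a product over the factors, one-endedness plus finite generation of each $K_i$ forces $\mathrm{Der}(K_i,M)\cong M$ (every derivation on $K_i$ is principal, uniquely so since $M^{K_i}=0$), and $\mathrm{PDer}(K,M)$ is then the image of your map $\Delta$, which is split monic precisely because $m\geq1$ supplies an identity coordinate. The paper instead argues homologically in two stages: for $n=0$ it quotes the Mayer--Vietoris sequence of the free product $J=*_{i=1}^mK_i$ with coefficients $\mathbb{Z}[K]$ (all the terms $H^1(K_i;\mathbb{Z}[K])$ vanish, so $H^1$ is the cokernel of a split diagonal map and is immediately free of rank $m-1$); for $n>0$ it builds a free resolution $C_*(K)$ from one for $J$ by adjoining $n$ free generators in degree $1$, and the resulting short exact sequence of cochain complexes yields $0\to\mathbb{Z}[K]^n\to H^1(K;\mathbb{Z}[K])\to H^1(J;\mathbb{Z}[J])\otimes_{\mathbb{Z}[J]}\mathbb{Z}[K]\to0$, which splits for formal reasons because the quotient is free. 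This answers the worry in your last paragraph: the paper does use Mayer--Vietoris, but it peels off the free factors at the resolution level rather than treating them as leaves of the graph of groups, so the infinite cyclic factors contribute the \emph{submodule} $\mathbb{Z}[K]^n$ rather than the quotient $\bigoplus_j M/(x_j-1)M$, and no connecting map ever needs to be computed. Both arguments lean on $m\geq1$ at the same pressure point (your splitting of $\Delta$; the paper's implicit vanishing of $H^0(J;\mathbb{Z}[K])$, which is what makes its sequence exact on the left), and both need finite generation of the $K_i$ so that $H^1(K_i;-)$ commutes with the direct sum decomposition of $\mathbb{Z}[K]$ over cosets. What your approach buys is self-containedness and transparency: no resolutions, no Mayer--Vietoris, and the right $\mathbb{Z}[K]$-module structure (which the application to counting indecomposable factors requires) is visible because every identification you make is right $\mathbb{Z}[K]$-linear. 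What the paper's approach buys is a formulation via coefficient extension from $H^1(J;\mathbb{Z}[J])$ that meshes with the end-module discussion preceding the lemma and adapts more readily to higher-degree computations.
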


\begin{proof}
If $n=0$ the result follows from the Mayer-Vietoris sequence for the free product, with coefficients $\mathbb{Z}[K]$.

In general, let $J=*_{i=1}^mK_i$ and let $C_*(J)$ be a resolution 
of the augmentation module $\mathbb{Z}$
by free $\mathbb{Z}[J]$-modules,
with $C_0(J)=\mathbb{Z}[J]$.
Then there is a corresponding resolution $C_*(K)$ of $\mathbb{Z}$
with $C_q(K)\cong\mathbb{Z}[K]\otimes_{\mathbb{Z}[J]}C_q(J)$
if $q\not=1$ and $C_1(K)\cong
\mathbb{Z}[K]\otimes_{\mathbb{Z}[J]}C_q(J)\oplus\mathbb{Z}[K]^n$.
Hence there is a short exact sequence of chain complexes 
(of left $\mathbb{Z}[K]$-modules)
\[
0\to\mathbb{Z}[K]\otimes_{\mathbb{Z}[J]}C_*(J)\to{C_*(K)}\to
\mathbb{Z}[K]^n\to0,
\]
where the third term is concentrated in degree 1.
The exact sequence of cohomology with coefficients $\mathbb{Z}[K]$
gives a short exact sequence of {\it right} $\mathbb{Z}[K]$-modules
\[
0\to\mathbb{Z}[K]^n\to{H^1(K;\mathbb{Z}[K])}\to
{H^1(Hom_{\mathbb{Z}[K]}
(\mathbb{Z}[K]\otimes_{\mathbb{Z}[J]}C_*(J),\mathbb{Z}[K])}\to0.
\]
We may identify the right-hand term with
$H^1(J;\mathbb{Z}[J])\otimes_{\mathbb{Z}[J]}\mathbb{Z}[K]
\cong\mathbb{Z}[K]^{m-1}$, since $J$ is finitely generated.
The lemma follows easily.
\end{proof}

The lemma applies to $A$ and $C$,
since they are finitely generated and torsion-free.
The indecomposable factors of $C$ are either conjugate to subgroups of
indecomposable factors of $A$ or are infinite cyclic,
by the Kurosh subgroup theorem.
If $A$ and $C$ have no free factors and the factors of $C$ are conjugate 
into distinct factors of $A$ then, 
after modifying $\varphi$ appropriately,
we may assume that $\alpha(C_i)\leq{A_i}$, for all $i$.
However, we cannot expect to also normalize $\gamma$ in a similar fashion.

\section{the dual class}

If $M$ is a closed 3-manifold with $\beta_1(M)>0$ then there is 
an essential map $f:M\to{S^1}$.
Transversality and the Loop Theorem together imply that
there is a closed incompressible surface $S\subset{M}$ such that 
$M\setminus{S}$ is connected. 
Hence $\pi_1(M)$ is an HNN extension with base $\pi_1(M\setminus{S})$ 
and associated subgroups copies of $\pi_1(S)$.
Moreover, the stable letter of the extension is represented by a simple closed curve in $M$ which intersects $S$ transversely in one point.
Let $w=w_1(M)$.
Then $w_1(S)=w|_S$ and the image of the fundamental class $[S]$ 
in $H_2(M;\mathbb{Z}^w)$ is Poincar\'e dual to the image of $f$ in $H^1(M;\mathbb{Z})=[M,S^1]$.

There is no obvious analogue of transversality in group theory.
Nevertheless a similar result holds for $PD_3$-groups.
(We consider only the orientable case, for simplicity.)

\begin{theorem}
\label{dual class}
Let $G=HNN(A;\alpha,\gamma:C\to{A})$ be an orientable $PD_3$-group 
which is an HNN extension split over a $PD_2$-group $C$.
Let $f\in{H^1(G;\mathbb{Z})}$ be the epimorphism 
with kernel $\langle\langle{A}\rangle\rangle_G$.
Then $f\frown[G]$ is the image of $[C]$ in $H_2(G;\mathbb{Z})$, 
up to sign.
\end{theorem}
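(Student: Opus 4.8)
The plan is to reduce the theorem to a single chain-level identity comparing the cap product with $f$ and the connecting homomorphism of the Mayer--Vietoris sequence of the HNN extension. This is the algebraic shadow of the geometric picture sketched above: if $f$ is realised by a map $\theta\colon K(G,1)\to S^1$, then $C$ plays the role of the fibre $\theta^{-1}(\mathrm{pt})$, the base $A$ plays the role of the complement, and the classical fact $\theta^*(\iota)\frown[M]=[\theta^{-1}(\mathrm{pt})]$ is what we must reproduce without transversality.

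First I would write down the Mayer--Vietoris sequence of the HNN extension with coefficients $\mathbb{Z}$,
\[
\cdots\to H_3(A;\mathbb{Z})\to H_3(G;\mathbb{Z})\xrightarrow{\ \partial\ }H_2(C;\mathbb{Z})\xrightarrow{\alpha_*-\gamma_*}H_2(A;\mathbb{Z})\to\cdots,
\]
and let $j\colon C\hookrightarrow G$ be the inclusion. Since $\mathrm{c.d.}\,A=2$ we have $H_3(A;\mathbb{Z})=0$, so $\partial$ is injective; and because $C$ is a $PD_2$-group, the argument of Lemma~\ref{pd2 minimal} (via \cite{BE78}) shows that $\partial\colon H_3(G;\mathbb{Z})\to H_2(C;\mathbb{Z})$ is an isomorphism of infinite cyclic groups, whence $\partial[G]=\pm[C]$. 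The whole theorem then follows from the identity
\[
f\frown x=\pm\,j_*\bigl(\partial x\bigr)\qquad\text{for all }x\in H_*(G;\mathbb{Z}),
\]
since taking $x=[G]$ gives $f\frown[G]=\pm\,j_*(\partial[G])=\pm\,j_*[C]$, which is the image of $[C]$ in $H_2(G;\mathbb{Z})$ up to sign.

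To establish this identity I would work at the chain level, using the free $\mathbb{Z}[G]$-resolution of $\mathbb{Z}$ adapted to the HNN structure: starting from free resolutions $P_*\to\mathbb{Z}$ over $\mathbb{Z}[A]$ and $Q_*\to\mathbb{Z}$ over $\mathbb{Z}[C]$, one forms the mapping cone of the map $\mathbb{Z}[G]\otimes_{\mathbb{Z}[C]}Q_*\to\mathbb{Z}[G]\otimes_{\mathbb{Z}[A]}P_*$ induced by the two inclusions (one of them twisted by the stable letter $t$), and $\partial$ is then induced by projection onto this ``edge'' summand. Representing $f$ by the $1$-cocycle that records the exponent sum of $t$, and choosing a diagonal approximation compatible with the splitting, the cap product $f\frown(-)$ truncates a cycle along the edge term and returns precisely $\pm\,j_*\partial$.

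The main obstacle is exactly this last computation: one must marry the diagonal approximation that computes $f\frown(-)$ with the gluing data that produces the Mayer--Vietoris boundary map, and keep careful track of orientations and signs -- it is this bookkeeping that is responsible for the ``up to sign'' in the conclusion. A reassuring sanity check is $G=\mathbb{Z}^3=\pi_1(T^3)$ presented as the trivial HNN extension of $\mathbb{Z}^2$, where $f\frown[T^3]$ is the class of the fibre torus, in agreement with $\pm\,j_*[C]$.
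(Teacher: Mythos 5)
Your outline is correct, and it takes a genuinely different route from the paper's proof. The paper never invokes the Mayer--Vietoris sequence with $\mathbb{Z}$-coefficients directly: it realizes $(A;\alpha,\gamma)$ as a $PD_3^+$-pair via \cite{BE78}, writes $K=\langle\langle{A}\rangle\rangle_G=\mathrm{Ker}(f)$ as an increasing union of iterated amalgamations of copies of $A$ over copies of $C$ (so that the image of $[C]$ generates an infinite cyclic direct summand of $H_2(K;\mathbb{Z})=H_2(G;\Lambda)$, where $\Lambda=\mathbb{Z}[t,t^{-1}]$), and then compares Poincar\'e duality over $\Lambda$ with duality over $\mathbb{Z}$, using the Universal Coefficient spectral sequence to identify the image of $H^1(f)$ with the $\Lambda$-torsion of $H^1(G;\Lambda)$ before a final diagram chase. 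Your plan instead reduces the whole theorem to the single identity $f\frown x=\pm\,j_*(\partial x)$, where $\partial$ is the Mayer--Vietoris connecting map; combined with the fact, already used in Lemma~\ref{pd2 minimal} via \cite{BE78}, that $\partial\colon H_3(G;\mathbb{Z})\to H_2(C;\mathbb{Z})$ is an isomorphism of infinite cyclic groups, this yields the theorem at once. That identity is true, so your strategy is viable, and arguably more economical than the paper's.

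The weakness is that this identity is exactly where all the content lies, and your proposal only gestures at it (``marry the diagonal approximation with the gluing data''), explicitly flagging it as the main obstacle rather than proving it. You can close this gap without any diagonal-approximation bookkeeping. The Mayer--Vietoris sequence of the HNN extension is the long exact homology sequence of the coefficient sequence $0\to\mathbb{Z}[G/C]\to\mathbb{Z}[G/A]\to\mathbb{Z}\to0$ given by the augmented simplicial chain complex of the Bass--Serre tree, with the outer terms identified by Shapiro's lemma. Push this sequence forward along the augmentation $\varepsilon_C\colon\mathbb{Z}[G/C]\to\mathbb{Z}$; under Shapiro's lemma the induced map on $H_*(G;-)$ is exactly $j_*$, so naturality of connecting homomorphisms identifies $j_*\circ\partial$ with the connecting homomorphism of the pushout extension $0\to\mathbb{Z}\to{E}\to\mathbb{Z}\to0$. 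The class of that extension in $H^1(G;\mathbb{Z})=\mathrm{Hom}(G,\mathbb{Z})$ is computed by lifting a generator: it is the homomorphism sending $g$ to the signed number of tree edges crossed on the path from the vertex $A$ to the vertex $gA$, which kills $A$ and sends $t$ to $1$, hence equals $\pm f$. Finally, the connecting homomorphism $H_n(G;\mathbb{Z})\to H_{n-1}(G;M)$ attached to an extension of $\mathbb{Z}$ by a module $M$ with class $c\in H^1(G;M)$ is $\pm\,c\frown(-)$; this is the standard agreement between the Yoneda-type $\mathrm{Ext}$--$\mathrm{Tor}$ pairing (defined by connecting maps) and the cap product. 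Taking $M=\mathbb{Z}$ and $c=\pm f$ gives $f\frown x=\pm\,j_*(\partial x)$, and with $x=[G]$ your argument is complete.
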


\begin{proof}
The subgroup $C$ is orientable and the pair $(A;\alpha,\gamma)$ 
is a $PD_3^+$-pair \cite[Theorem 8.1]{BE78},
and so there is an exact sequence
\begin{equation*}
\begin{CD}
H_3(A,\partial;\mathbb{Z})@>(1,1)>>H_2(C;\mathbb{Z})\oplus{H_2(C;\mathbb{Z})}
@>(\alpha_*,-\gamma_*)>>{H_2(A;\mathbb{Z})}\to
{H_2(A;\partial;\mathbb{Z})}.
\end{CD}
\end{equation*}
Hence $\alpha_*[C]=\gamma_*[C]$, and the subgroup they generate is 
an infinite cyclic direct summand of $H_2(A;\mathbb{Z})$, 
since $H_2(A;\partial;\mathbb{Z})\cong{H^1(A;\mathbb{Z})}$ is free abelian.

Let $t\in{G}$ correspond to the stable letter for the $HNN$ extension, 
and let 
$A_j=t^jAt^{-j}$, $\alpha_j(c)=t^j\alpha(c)t^{-j}$ and 
$\gamma_j(c)=t^j\gamma(c)t^{-j}$,
for all $c\in{C}$ and $j\in\mathbb{Z}$.
Let $K_p$ be the subgroup generated by $\cup_{|j|\leq|p|}A_j$, for $p\geq0$.
Then $K_0=A$ and 
\[
K_{p+1}=A_{-p-1}*_{\alpha_{-p}=\gamma_{-p-1}}
K_p*_{\alpha_{p+1}=\gamma_p}A_{p+1},\quad{for~all~~p\geq0},
\]
and $K=\langle\langle{A}\rangle\rangle_G=\mathrm{Ker}(f)$
is the increasing union $K=\cup{K_p}$  
of iterated amalgamations with copies of $A$ over copies of $C$.
Each pair $(K_p;\alpha_{-p},\gamma_p)$ is again a $PD_3^+$-pair,
and so the images of $[C]$ in $H_2(K;\mathbb{Z})$ under the 
homomorphisms induced by the $\alpha_n$s all agree.

Let $\Lambda=\mathbb{Z}[G/K]=\mathbb{Z}[t,t^{-1}]$,
and let $\varepsilon:\Lambda\to\mathbb{Z}$ be the augmentation.
If $M$ is a right $\Lambda$-module let $\overline{M}$ denote the left module with the conjugate action given by $t.m=mt^{-1}$, for all $m\in{M}$.

The homology groups $H_i(K;\mathbb{Z})=H_2(G;\Lambda)$ are finitely generated left 
$\Lambda$-modules,
with action deriving from the action of $G$ on $K$ by conjugation.
(The cohomology modules $H^j(G;\Lambda)$ are naturally right modules.)
Then $H_2(G;\Lambda)=H_2(K;\mathbb{Z})=\lim{H_2(K_p;\mathbb{Z})}$.
Since $t.\alpha_{n*}[C]=\alpha_{(n+1)*}[C]=\alpha_{n*}[C]$, for all $n$,
the image of $[C]$ in $H_2(K;\mathbb{Z})$ 
generates an infinite cyclic direct summand.

Poincar\'e duality gives a commutative diagram
\begin{equation*}
\begin{CD}
\overline{H^1(\mathbb{Z};\Lambda)}@>H^1(f)>>\overline{H^1(G;\Lambda)}@>\frown[G]>>
H_2(G;\Lambda)\\
@V\cong{}V\varepsilon_\#V @VV\varepsilon_\#V @VV\varepsilon_\#V\\
H^1(\mathbb{Z};\mathbb{Z})@>id_\mathbb{Z}\mapsto{f}>>
H^1(G;\mathbb{Z})@>\frown[G]>>H_2(G;\mathbb{Z})
\end{CD}
\end{equation*}
in which the vertical homomorphisms are induced by the change of coefficients 
$\varepsilon$ and the two right hand horizontal homomorphisms 
are Poincar\'e duality isomorphisms.
The module $H^1(G;\Lambda)$ an extension of $\overline{Hom_\Lambda(K/K',\Lambda)}$
by $Ext_\Lambda^1(\mathbb{Z},\Lambda)$,
by the Universal Coefficient spectral sequence.
Since $Hom_\Lambda(K/K',\Lambda)$ has no non-trivial $\Lambda$-torsion,
while $Ext_\Lambda^1(\mathbb{Z},\Lambda)\cong\Lambda/(t-1)\Lambda=\mathbb{Z}$,
the homomorphism $H^1(f)$ carries $H^1(\mathbb{Z};\Lambda)=
Ext_\Lambda^1(\mathbb{Z},\Lambda)\cong\mathbb{Z}$ onto the $\Lambda$-torsion submodule of $H^1(G;\Lambda)$.
A diagram chase shows that $f\frown[G]$ is the image of $[C]$
in $H_2(G;\mathbb{Z})$, up to sign.
\end{proof}

In \cite{KR88} it is shown that if a $PD_3$-group $G$ has  
a subgroup $S$ which is a $PD_2$-group then $G$ splits over a subgroup
commensurate with $S$ if and only if an invariant $sing(S)\in\mathbb{Z}/2\mathbb{Z}$ is 0,
and then $S$ is maximal among compatibly oriented commensurate subgroups.
Theorem \ref{dual class} suggests a slight refinement 
of this splitting criterion.

\begin{thm}
[Kropholler-Roller \cite{KR88}]
Let $G$ be an orientable $PD_3$-group and $S<G$ a subgroup 
which is an orientable $PD_2$-group.
Then
\begin{enumerate}
\item$G\cong{A*_TB}$ for some $T$ commensurate with $S$ $\Leftrightarrow$
$sing(S)=0$ and $[S]=0$ in $H_2(G;\mathbb{Z})$;
\item$G\cong{A*_T\varphi}$ for some $T$ commensurate with $S$ $\Leftrightarrow$
$sing(S)=0$ and $[S]$ has infinite order in $H_2(G;\mathbb{Z})$;
\item$G\cong{A*_S\varphi}$ $\Leftrightarrow$
$sing(S)=0$ and $[S]$ generates an infinite direct summand of $H_2(G;\mathbb{Z})$.
\end{enumerate}
\end{thm}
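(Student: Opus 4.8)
The plan is to read off the image $[S]_G$ of $[S]$ in $H_2(G;\mathbb{Z})$ from whatever splitting is at hand, using Theorem \ref{dual class} in the HNN case and a Mayer--Vietoris computation in the GFPA case, and to feed this into the Kropholler--Roller criterion of \cite{KR88}. Throughout I would use that $H_2(G;\mathbb{Z})\cong{H^1(G;\mathbb{Z})}=\mathrm{Hom}(G,\mathbb{Z})$ is free abelian, so that $[S]_G$ is either $0$ or of infinite order, together with the following bookkeeping for a subgroup $T$ commensurate with $S$: writing $R=S\cap{T}$ and choosing compatible orientations, the inclusions give $[R]_G=[S:R]\,[S]_G=[T:R]\,[T]_G$ in $H_2(G;\mathbb{Z})$. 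Since this group is torsion-free, $[S]_G$ is zero, of infinite order, or primitive precisely when $[T]_G$ is. In each of the three cases on the right the hypothesis supplies a splitting over a subgroup commensurate with $S$ (over $S$ itself in (3)), so $sing(S)=0$ is forced in all three cases by \cite{KR88}, and I would concentrate on the homological conditions.

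For the forward implications I would compute $[T]_G$ directly. If $G=A*_TB$, then since $A$ and $B$ have infinite index and hence cohomological dimension at most $2$, the left-hand end of the Mayer--Vietoris sequence is $0\to H_3(G;\mathbb{Z})\xrightarrow{\delta}H_2(T;\mathbb{Z})\to H_2(A;\mathbb{Z})\oplus H_2(B;\mathbb{Z})$, where $\delta$ is an isomorphism because $T$ is a $PD_2$-group \cite{BE78}, as in the proof of Lemma \ref{pd2 minimal}; hence the next map vanishes, so $[T]_G=0$ and therefore $[S]_G=0$. If instead $G=A*_T\varphi$, then Theorem \ref{dual class} identifies $[T]_G$ with $\pm(f\frown[G])$, where $f$ is the associated epimorphism; since $f$ is surjective it is a primitive element of $\mathrm{Hom}(G,\mathbb{Z})$, and as $\frown[G]$ is the Poincar\'e duality isomorphism, $[T]_G$ generates an infinite cyclic direct summand, so $[S]_G$ has infinite order. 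Taking $T=S$ here shows that an HNN splitting over $S$ itself makes $[S]_G$ itself primitive, which is the forward half of (3).

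The converses of (1) and (2) would then follow by a dichotomy. Assuming $sing(S)=0$, \cite{KR88} produces a splitting of $G$ over some $T$ commensurate with $S$, and this is either a GFPA or an HNN extension. By the two forward computations these alternatives force $[S]_G=0$ and $[S]_G$ of infinite order respectively; as $[S]_G$ is always $0$ or of infinite order, its value selects the type of splitting. Thus $[S]_G=0$ excludes the HNN case and yields (1), while $[S]_G$ of infinite order excludes the GFPA case and yields (2).

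The converse of (3) is where I expect the real work, since it requires upgrading a splitting over a commensurate subgroup to one over $S$ itself. Granting $[S]_G$ primitive, part (2) already gives an HNN extension $G=A*_T\varphi$ with $T$ commensurate with $S$, and Theorem \ref{dual class} makes $[T]_G$ primitive too; the relation $[S:R]\,[S]_G=[T:R]\,[T]_G$ with both classes primitive then forces $[S:R]=[T:R]$ and $[S]_G=\pm[T]_G$. The difficulty is to pass from this numerical coincidence to $T=S$, and here primitivity must be used essentially: the sublattices $\langle2e_1,e_2\rangle$ and $\langle{e_1,2e_2}\rangle$ of $\mathbb{Z}^2$ have equal indices over their intersection and equal (non-primitive) classes yet are distinct. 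My plan is to match the epimorphism $f$ of the $T$-splitting with the primitive $\phi\in\mathrm{Hom}(G,\mathbb{Z})$ dual to $[S]_G$---they agree up to sign because $f\frown[G]=\pm[S]_G=\pm(\phi\frown[G])$ and $\frown[G]$ is injective---to observe that $\phi$ annihilates the finite-index subgroup $R=S\cap{T}$ of $S$ and hence all of $S$, so that $S\le\ker\phi=\langle\langle{A}\rangle\rangle$, and then to invoke the maximality clause of \cite{KR88} applied to both $S$ and the splitting subgroup $T$. I expect primitivity of $[S]_G$ to characterise $S$ as the unique maximal subgroup in its commensurability class realising that class, forcing $T=S$, after which Lemma \ref{brit} should confirm that the resulting HNN structure has $S$ as its associated subgroup. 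Pinning down this last identification cleanly is the main obstacle.
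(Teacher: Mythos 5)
Your proposal follows essentially the same route as the paper: the forward implications come from the bounding/Mayer--Vietoris computation for the GFPA case and from Theorem \ref{dual class} for the HNN case, with $sing(S)=0$ and the commensurability bookkeeping supplied by \cite{KR88}; the paper's own proof is in fact terser than yours, leaving the converse directions and part (3) essentially to that citation. The one step you flag as an obstacle closes immediately from the maximality clause of \cite{KR88} quoted just before the theorem: the splitting subgroup $T$ is maximal among compatibly oriented commensurate subgroups, so $S\leq{T}$, whence $[S]_G=[T:S]\,[T]_G$ in $H_2(G;\mathbb{Z})$ and primitivity of $[S]_G$ forces $[T:S]=1$, i.e.\ $T=S$, with no appeal to Lemma \ref{brit} or to kernels needed.
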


\begin{proof}
The group $G$ splits over a subgroup $T$ commensurate with $S$ if and only if $sing(S)=0$ \cite{KR88},
and $[S]$ and $[T]$ are then proportional.
If $G=A*_TB$ is a generalized free product with amalgamation over 
a $PD_2$-group $T$ then the pairs $(A,T)$ and $(B,T)$ 
are again $PD_3^+$-pairs \cite{BE78}.
The image of $[T]$ in $H_2(G;\mathbb{Z})$ is trivial, since $T$
bounds each of $(A,T)$ and $(B,T)$, and so $[S]=0$ also.

If $G\cong{A*_T\varphi}$ is an HNN extension then the Poincar\'e dual 
of $[T]$ is an epimorphism $f:G\to\mathbb{Z}$, by the theorem, 
and so $[T]$ generates an infinite cyclic direct summand 
of $H_2(G;\mathbb{Z})$. 
Hence $[S]$ also has infinite order.
\end{proof}

If $[C]=[S]$ and $sing(S)=0$ is $sing(C)=0$ also?

Splittings over $PD_2$-groups dual to a given epimorphism need not be unique.
Let $W$ be an aspherical orientable 3-manifold with incompressible
boundary and two boundary components $U,V$.
Let $M=DW$ be the double of $W$ along its boundary.
Then $M$ splits over copies of $U$ and $V$, 
and $[U]=[V]$ in $H_2(M;\mathbb{Z})$.
If $U$ and $V$ are not homeomorphic the corresponding (minimal)
splittings of $G=\pi_1(M)$ are evidently distinct.
For instance, we may start with the hyperbolic 3-manifold of 
\cite[Example 3.3.12]{Th},
which is the exterior of a knotted $\theta$-curve  $\Theta\subset{S^3}$.
Let $W$ be obtained by deleting an open regular neighbourhood of a meridian
of one of the arcs of $\Theta$.
Then $W$ is aspherical, $\partial{W}=T\amalg{T_2}$ and each component of $\partial{W}$ is incompressible in $W$.

\section{no properly ascending HNN extensions}

Cohomological arguments imply that no $PD_3$-group is 
a properly ascending HNN extension \cite[Theorem 3]{Hi03}.
A stronger result holds for 3-manifold groups:
no finitely generated subgroup can be conjugate to a proper subgroup of itself \cite{Bu07}.
We shall adapt the argument of \cite{Hi03} to prove the corresponding result for $FP_2$ subgroups of $PD_3$-groups.

\begin{theorem}
\label{no proper ascension}
Let $H$ be an $FP_2$ subgroup of a $PD_3$-group $G$.
If $gHg^{-1}\leq{H}$ for some $g\in{G}$ then $gHg^{-1}=H$.
\end{theorem}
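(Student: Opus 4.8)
The plan is to show that a proper inclusion $gHg^{-1}<H$ would produce a properly ascending HNN extension inside $G$, and then to rule this out cohomologically, exactly paralleling \cite[Theorem 3]{Hi03} but carried out relative to the ambient Poincar\'e duality group $G$ rather than assuming the HNN extension is all of $G$. First I would dispose of the case $[G:H]<\infty$, where $H$ is itself a $PD_3$-group: since conjugation is an automorphism of $G$ we have $[G:gHg^{-1}]=[G:H]$, and $gHg^{-1}\leq{H}\leq{G}$ forces $[H:gHg^{-1}]=1$. So I may assume $H$ has infinite index, whence $c.d.H\leq2$ (the dimension bound for infinite-index subgroups already used in \S3); being $FP_2$, $H$ is then of type $FP$. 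If $H$ is free, then $gHg^{-1}\cong{H}$ has the same finite rank and a proper inclusion is impossible by the Nielsen--Schreier index formula unless $H\cong\mathbb{Z}$; the remaining case is absorbed into the argument below, with the top end module in degree $1$ instead of $2$. Thus the substantive case is $d:=c.d.H=2$.

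Next I would build the extension. Suppose for contradiction that $\varphi:H\to{H}$, $\varphi(h)=ghg^{-1}$, has proper image $gHg^{-1}<H$. Writing $H_n=g^{-n}Hg^n$ gives a strictly increasing chain $H=H_0<H_1<\cdots$ whose union $N=\bigcup_n{H_n}$ is not finitely generated. Using Britton's Lemma in the manner of Lemma \ref{brit}, I would verify that $\Gamma=\langle{H,g}\rangle$ is the ascending HNN extension $HNN(H;\mathrm{id},\varphi)$ with stable letter $g$, so that $N=\mathrm{Ker}(\Gamma\to\mathbb{Z})$; in particular $\Gamma$ is a properly ascending HNN extension over the $FP_2$ base $H$.

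The contradiction is drawn from the Mayer--Vietoris (Wang) sequence of this HNN extension with coefficients in $\mathbb{Z}[G]$. Since $H$ is of type $FP$, its associated-subgroup differential is the self-map $1-\varphi^*$ of $H^d(H;\mathbb{Z}[G])$, and as $H^{d+1}(H;\mathbb{Z}[G])=0$ the top of the sequence reads
\[
H^{d}(H;\mathbb{Z}[G])\xrightarrow{\,1-\varphi^*\,}H^{d}(H;\mathbb{Z}[G])\to{H^{d+1}(\Gamma;\mathbb{Z}[G])}\to0 ,
\]
so that $H^{d+1}(\Gamma;\mathbb{Z}[G])\cong\mathrm{coker}(1-\varphi^*)$. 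Now I split on the index of $\Gamma$ in $G$. If $[G:\Gamma]<\infty$ then $\Gamma$ is a $PD_3$-group and \cite[Theorem 3]{Hi03} already forbids its properly ascending structure. If $[G:\Gamma]=\infty$ then $c.d.\Gamma\leq2$, so (in the main case $d=2$) $H^{3}(\Gamma;\mathbb{Z}[G])=0$ and the displayed map $1-\varphi^*$ must be surjective.

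The heart of the matter, and the step I expect to be the main obstacle, is to show that the proper self-embedding $\varphi$ forces $\mathrm{coker}(1-\varphi^*)\neq0$, contradicting the surjectivity just deduced (and, in the finite-index case, reproving the key input to \cite{Hi03}). Here $\varphi^*$ factors as the restriction $H^d(H;\mathbb{Z}[G])\to{H^d(gHg^{-1};\mathbb{Z}[G])}$ followed by the conjugation isomorphism induced by $g\in{G}$, and the proper inclusion $gHg^{-1}<H$ is recorded by the coset combinatorics of the Bass--Serre tree of $\Gamma$: additively $H^d(H;\mathbb{Z}[G])=\bigoplus_{H\backslash{G}}H^d(H;\mathbb{Z}[H])$, and $\varphi^*$ shifts these tree-indexed summands strictly toward the base vertex, so the summands at the outer end of the ascending tree cannot lie in the image of $1-\varphi^*$. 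Making this precise for a general $FP_2$ base---controlling the transformation of the top end module $H^d(H;\mathbb{Z}[H])$ under a proper endomorphism, rather than the easy situation of a $PD_2$ base where this module is simply $\mathbb{Z}$---is the genuine technical content extending \cite{Hi03}. Once $\mathrm{coker}(1-\varphi^*)\neq0$ is established, both alternatives above are contradicted, so no proper inclusion can occur and $gHg^{-1}=H$.
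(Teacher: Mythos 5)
Your first half tracks the paper: the reduction to $[G:H]=\infty$, and the construction of $\Gamma=\langle H,g\rangle$ as the properly ascending HNN extension $HNN(H;\mathrm{id},\varphi)$, are exactly how the paper begins (its proof verifies injectivity of $K=H*_H\theta\to G$ not by a bare appeal to Britton's Lemma but by a normal-form computation: if $f$ identifies two normal forms then some power $g^{n-m}$ lies in $H$ and conjugates $H$ onto itself, contradicting properness unless $n=m$ -- you would need to supply this, since Britton's Lemma alone only controls $K$, not its image in $G$). The genuine gap is the step you yourself flag as ``the main obstacle'': the entire contradiction rests on $\mathrm{coker}(1-\varphi^*)\neq0$ in top degree, and this is not established, nor is the heuristic behind it sound. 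The map $\varphi^*$ is restriction to $gHg^{-1}$ followed by conjugation, and restriction spreads each summand of $H^d(H;\mathbb{Z}[G])\cong\bigoplus_{H\backslash G}H^d(H;\mathbb{Z}[H])$ over the (possibly infinitely many) $gHg^{-1}$-cosets, so $1-\varphi^*$ is not a clean shift; worse, your own description says the summands are shifted ``strictly toward the base vertex'', and that is precisely the direction in which $1-S$ \emph{is} surjective: for the model shift $S$ on $\bigoplus_{n\geq0}\mathbb{Z}$ with $S(e_{n+1})=e_n$ and $S(e_0)=0$, one checks inductively that $1-S$ is onto. So no contradiction is extracted. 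There are two further problems in your case analysis: for free $H$ of rank $\geq2$ the Nielsen--Schreier argument fails (nonabelian free groups admit proper self-embeddings of the same rank, of infinite index, e.g. $a\mapsto a^2$, $b\mapsto b^2$), and the promised degree-$1$ variant also fails, because an ascending HNN extension over a free base typically has $c.d.\,\Gamma=2$ (Baumslag--Solitar groups), so $H^{2}(\Gamma;\mathbb{Z}[G])$ need not vanish and surjectivity of $1-\varphi^*$ is not forced when $d=1$.

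It is worth seeing how the paper sidesteps all of this: it never touches the top of the Wang sequence. Instead it works at the bottom -- by \cite[Theorem 0.1]{BG85}, $H^1(K;\mathbb{Z}[K])$ is a quotient of $H^0(H;\mathbb{Z}[K])=0$, so $K$ has one end -- then notes that $K$ cannot have finite index in $G$ by \cite[Theorem 3]{Hi03}, so $K$ is a $2$-dimensional duality group; the coarse Alexander duality theorem of \cite{KK05} then realizes $K$ as the ambient group of a $PD_3$-pair $(K,\mathcal{S})$, and doubling along the boundary produces a $PD_3$-group which is itself a properly ascending HNN extension, contradicting \cite{Hi03}. The fact that the paper resorts to the geometric input of \cite{KK05} and a doubling trick, rather than the direct cokernel computation you propose, is a strong indication that the purely algebraic top-degree argument is not available; as written, your proposal reduces the theorem to an unproved claim that is likely false in the form stated.
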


\begin{proof}
Suppose that $gHg^{-1}<H$. Then $g\not\in{H}$.
Let $\theta(h)=ghg^{-1}$, for all $h\in{H}$,
and let $K=H*_H\theta$ be the associated HNN extension, 
with stable letter $t$.
The normal closure of $H$ in $K$ is the union 
$\cup_{r\in\mathbb{Z}}t^rHt^{-r}$, and so
every element of $K$ has a normal form $k=t^mt^rht^{-r}$, 
where $m$ is uniquely determined by $k$, 
and $h$ is determined by $k,m$ and $r$.
Let $f:K\to{G}$ be the homomorphism defined by
$f(h)=h$ for all $h\in{H}$ and $f(t)=g$.
If $f(t^mt^rht^{-r})=f(t^nt^sh't^{-s})$ for some $m,n,r,s$
then $g^{n-m}=g^sh'g^{-s}g^th^{-1}g^{-t}$.
After conjugating by a power of $g$ if necessary,
we may assume that $s,t\geq0$, and so $g^{n-m}\in{H}$.
But then $H=g^{|n-m|}Hg^{-|n-m|}$.
Since $gHg^{-1}$ is a proper subgroup of $H$, we must have $n=m$.
It follows easily that $f$ is an isomorphism from $K$ to 
the subgroup of $G$ generated by $g$ and $H$.

Since $K$ is an ascending HNN extension with $FP_2$-base,
$H^1(K;\mathbb{Z}[K])$ is a quotient of $H^0(H;\mathbb{Z}[K])=0$
\cite[Theorem 0.1]{BG85}.
Hence it has one end.
Since no $PD_3$-group is an ascending HNN extension \cite[Theorem 3]{Hi03},
$K$ is a 2-dimensional duality group.
Hence it is the ambient group of a $PD_3$-pair $(K,\mathcal{S})$
\cite{KK05}.
Doubling this pair along its boundary gives a $PD_3$-group.
But this is again a properly ascending HNN extension,
and so cannot happen.
Therefore the original supposition was false, and so $gHg^{-1}=H$.
\end{proof}

\section{residual finiteness, splitting and largeness}

The fundamental group of an aspherical closed 3-manifold 
is either solvable or large \cite[Flowcharts 1 and 4]{AFW}.
This is also so for residually finite $PD_3$-groups containing 
$\mathbb{Z}^2$ \cite[Theorem 11.19]{Hi20}.
Here we shall give a weaker result for $PD_3$-groups which
split over other $PD_2$-groups.

\begin{theorem}
Let $G$ be a residually finite orientable $PD_3$-group which splits over 
an orientable $PD_2$-group $C$. Then either $\beta_1(G)>0$,
or $G$ maps onto $D_\infty$, or $G$ is large.
Hence $v\beta(G)>0$.
If $G$ is LERF and $\chi(C)<0$ then $G$ is large.
\end{theorem}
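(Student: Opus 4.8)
The plan is to separate the two kinds of splitting. If $G$ is an HNN extension over $C$ then Theorem \ref{dual class} applies at once: the Poincar\'e dual of $[C]$ is an epimorphism $f:G\to\mathbb{Z}$, so $\beta_1(G)>0$ and we are in the first alternative. All the difficulty is therefore in the GFPA case $G=A*_CB$, where $C$ is properly contained in each factor, so $[A:C]\geq2$ and $[B:C]\geq2$.

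In that case I would first dispose of the most rigid configuration. If $[A:C]=[B:C]=2$ then $C$ is normal in each of $A$ and $B$ with quotient of order two, and since $G/\langle\langle C\rangle\rangle\cong(A/\langle\langle C\rangle\rangle_A)*(B/\langle\langle C\rangle\rangle_B)$ this gives an epimorphism $G\to(\mathbb{Z}/2\mathbb{Z})*(\mathbb{Z}/2\mathbb{Z})=D_\infty$, the second alternative. (This is the algebraic shadow of $G$ being a union of two twisted $I$-bundles.)

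For the remaining cases the engine is the action of $G$ on its Bass--Serre tree $T$, whose quotient is a single edge with vertex groups $A,B$ and edge group $C$. For a normal subgroup $G_1$ of finite index $N$, with images $\bar A,\bar B,\bar C$ in the finite quotient $G/G_1$, a count of vertices and edges in the finite cover $G_1\backslash T$ gives
\[
b_1(G_1\backslash T)=1-N\Big(\tfrac1{|\bar A|}+\tfrac1{|\bar B|}-\tfrac1{|\bar C|}\Big).
\]
Whenever $b_1\geq1$ the graph has a non-separating edge, which exhibits $G_1$ as an HNN extension and so provides an epimorphism $G_1\to\mathbb{Z}$, whence $v\beta(G)>0$. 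If moreover one of the relative indices $[\bar A:\bar C],[\bar B:\bar C]$ is at least $3$ then the bracketed quantity is at most $-\tfrac{1}{6|\bar C|}$, so $b_1(G_1\backslash T)\geq2$ for a cover with $N$ large compared with $|\bar C|$; a surjection onto a free group of rank $\geq2$ then makes $G$ large, the third alternative. The final assertion $v\beta(G)>0$ holds in each branch, since the $D_\infty$ quotient has an index-two subgroup mapping onto $\mathbb{Z}$ and largeness forces $v\beta=\infty$.

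I expect the main obstacle to be producing finite quotients that realize these relative indices, that is, a finite quotient in which the image of $C$ is a proper subgroup of the images of $A$ and of $B$. This is exactly a separability requirement on the surface subgroup $C$: bare residual finiteness separates points from the identity but not, a priori, an element from $C$, so the delicate point is to force $[\bar A:\bar C]$ and $[\bar B:\bar C]$ above $1$. Under the LERF hypothesis this obstacle disappears and yields the sharper conclusion. Separability of $C$ produces a finite-index subgroup $G_1\geq C$ that retracts onto $C$; since $\chi(C)<0$ makes $C$ an orientable surface group of genus at least two, $C$ surjects the free group $F_2$ (send one handle to each generator and kill the rest), whence $G_1\twoheadrightarrow F_2$ and $G$ is large. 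This retraction argument is uniform over both splitting types, which is why the last sentence requires no case analysis.
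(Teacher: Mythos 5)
Your reduction is sound as far as it goes (the HNN case, the $[A:C]=[B:C]=2$ case mapping onto $D_\infty$, and the Bass--Serre count $b_1(G_1\backslash T)=1-N(\tfrac1{|\bar A|}+\tfrac1{|\bar B|}-\tfrac1{|\bar C|})$ are all correct), but the obstacle you flag at the end is not a technicality --- it is the entire content of the theorem, and your proposal leaves it open. You need a finite quotient in which the image of $C$ is proper in the images of both $A$ and $B$, and residual finiteness alone does not obviously provide one. The paper's proof supplies exactly this missing mechanism, homologically rather than by separability: assume $\beta_1(G)=0$; then $(A,C)$ and $(B,C)$ are $PD_3$-pairs \cite{BE78}, so ``half lives, half dies'' gives $\beta_1(C)\leq2\beta_1(A)$ and $\beta_1(C)\leq2\beta_1(B)$, whence $\beta_1(A),\beta_1(B)>0$; and Mayer--Vietoris with $H_1(G)$ finite and $H_2(G)\cong H^1(G;\mathbb{Z})=0$ gives $\beta_1(C)=\beta_1(A)+\beta_1(B)$, so $\beta_1(C)>\beta_1(A)$ and $\beta_1(C)>\beta_1(B)$. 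Now if in \emph{every} term of a descending filtration by finite-index normal subgroups the image of $C$ equalled the image of $A$ (say), the inclusion $C\hookrightarrow A$ would induce an isomorphism of profinite completions, forcing $\beta_1(C)=\beta_1(A)$ (first Betti numbers of finitely generated groups are profinite invariants) --- a contradiction. So some finite quotient does have $\bar C$ proper in both $\bar A$ and $\bar B$, and from there the index trichotomy (both indices $2$ gives $D_\infty$; an index $\geq3$ gives a virtually free quotient of rank $>1$, hence largeness) runs as in your counting argument. Without this step your proof establishes the trichotomy only in the two easy cases.

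Your LERF paragraph contains a second genuine gap: you assert that separability of $C$ yields a finite-index subgroup $G_1\geq C$ that \emph{retracts} onto $C$. LERF gives no such thing --- a virtual retraction is a homomorphism $G_1\to C$ restricting to the identity on $C$, which is a strictly stronger property (it holds for quasiconvex subgroups of cubulated hyperbolic groups by Haglund--Wise type results, but is not a formal consequence of subgroup separability). The paper's use of LERF is quite different: if the relative indices $[A_n:C_n]$ and $[B_n:C_n]$ are $\leq2$ for all $n$, then $C(A\cap\Delta_n)$ is normal in $A$ for each $n$, and LERF identifies $\bigcap_n C(A\cap\Delta_n)$ with $C$; hence $C$ is normal in $A$, in $B$, and so in $G$. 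Then $G$ is virtually $C\rtimes\mathbb{Z}$, hence a (surface bundle) $3$-manifold group, and largeness for $\chi(C)<0$ follows from \cite{AFW}. In the remaining case some relative index is $\geq3$ and largeness comes from the finite quotient as before. So both the main assertion and the LERF refinement need arguments your proposal does not contain.
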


\begin{proof}
For the first assertion, 
we may assume that $\beta_1(G)=0$, and that $G\cong{A*_CB}$.
Then $(A,C)$ and $(B,C)$ are $PD_3$-pairs, 
and so $\beta_1(C)\leq2\beta_1(A)$ and $\beta_1(C)\leq2\beta_1(B)$.
Since $\beta_1(C)>0$, we must have $\beta_1(A)>0$ and $\beta_1(B)>0$ also.
Moreover $\beta_1(C)=\beta_1(A)+\beta_1(B)$, 
since $H_1(G)$ is finite and $H_2(G)=0$.
Hence $\beta_1(C)>\beta_1(A)$ and $\beta_1(C)>\beta_1(B)$.

Let $\{\Delta_n|n\geq1\}$ be a descending filtration of $G$ 
by normal subgroups of finite index.
Then $A_n=A/A\cap\Delta_n$, $B_n=B/B\cap\Delta_n$ and $C_n=C/C\cap\Delta_n$
are finite, and $G$ maps onto $A_n*_{C_n}B_n$, for all $n$.
If $A_n*_{C_n}B_n$ is finite then $C_n=A_n $ or $B_n$.
Thus if all these quotients of $G$ are finite we may assume that
$C_n=A_n$ for all $n$.
But then the inclusion of $C$ into $A$ induces an isomorphism on profinite completions, and so $\beta_1(C)=\beta_1(A)$, contrary to 
what was shown in the paragraph above.

If $C_n$ is a proper subgroup of both $A_n$ and $B_n$ then either
${[A_n:C_n]=[B_n:C_n]}=2$,
in which case $G$ maps onto $D_\infty$,
or one of these indices is greater than 2,
in which case $A_n*_{C_n}B_n$ is virtually free of rank $>1$,
and so $G$ is large.
In each case, it is clear that $v\beta(G)\geq1$.

Suppose now that $G$ is LERF.
If $[A_n:C_n]\leq2$ then $C_n$ is normal in $A_n$,
and so $C(A\cap\Delta_n)$ is normal in $A$.
Hence if $[A_n:C_n]\leq2$ for all $n$ then $\cap_nC(A\cap\Delta_n)$
is normal in $A$.
Since $G$ is LERF, this intersection is $C$.
Hence if both $[A_n:C_n]\leq2$ and $[B_n:C_n]\leq2$ for all $n$ then $C$ is normal in $G$, so $G$ is virtually a semidirect product $C\rtimes\mathbb{Z}$,
and is a 3-manifold group.
If $\chi(C)<0$ then $G$ is large \cite[Flowcharts 1 and 4]{AFW}.
\end{proof}

Remark. The lower central series of $D_\infty=Z/2Z*Z/2Z$ gives 
a descending filtration by normal subgroups of finite index 
which meets each of the free factors trivially.

Is every $PD_3$-group either solvable or large?


\end{document}